\newtheorem{theorem}{Theorem}[section]
\newtheorem{lemma}[theorem]{Lemma}
\newtheorem{corollary}[theorem]{Corollary}
\newtheorem{obs}[theorem]{Observation}
\newtheorem{prop}[theorem]{Proposition}
\newtheorem{question}[theorem]{Question}
\theoremstyle{definition}
\newtheorem{example}[theorem]{Example}
\theoremstyle{remark}
\numberwithin{equation}{section}
\DeclareMathOperator{\rank}{rank}
\DeclareMathOperator{\nullity}{nullity}
\DeclareMathOperator{\n}{N}
\DeclareMathOperator{\mr}{mr}
\newcommand{\floor}[1]{\Big\lfloor #1 \Big\rfloor}
\newcommand{\ST}{\;\big|\;}
\author[1]{Keivan Hassani Monfared}
\affil[1]{\small Department of Mathematics and Statistics, University of Calgary, 2500 University Drive NW, Calgary, AB, T2N 1N4, Canada\\
k1monfared@gmail.com}
\author[2]{Sudipta Mallik}
\affil[2]{ Department of Mathematics and Statistics, Northern Arizona University, 801 S. Osborne Dr. PO Box: 5717, Flagstaff, AZ 86011, USA\\
sudipta.mallik@nau.edu}
\begin{document}
\title{The maximum multiplicity of an eigenvalue of symmetric matrices with a given graph}

\maketitle

\begin{abstract}For a graph $G$, $M(G)$ denotes the maximum multiplicity occurring of an eigenvalue of a symmetric matrix whose zero-nonzero pattern is given by the edges of $G$. We introduce two combinatorial graph parameters $T^-(G)$ and $T^+(G)$ that give  a lower and an upper bound for  $M(G)$ respectively, and we show that these bounds are sharp.
\end{abstract}

\renewcommand{\thefootnote}{\fnsymbol{footnote}} 
\footnotetext{\emph{2010 Mathematics Subject Classification. 05C50,65F18\\ Keywords: Symmetric Matrix, Graph, Tree, Eigenvalue, Path Cover.}}    
\renewcommand{\thefootnote}{\arabic{footnote}} 

\section{Introduction}
For an $n\times n$ symmetric matrix $A=[a_{ij}]$, the {\it graph of $A$}, denoted by $G(A)$, is the simple graph on $n$ vertices $1,2,\ldots,n$ where $\{i,j\}$ is an edge of $G(A)$ if and only if $a_{ij}\neq 0$ for $i\neq j$. For a graph $G$ on $n$ vertices, $S(G)$ denotes the set of all $n\times n$ real symmetric matrices whose graph is $G$, and $M(G)$ denotes the maximum multiplicity occurring of an eigenvalue of a matrix in $S(G)$. The {\it minimum rank} of $G$, denoted by $\mr(G)$, is the minimum rank of $A$ where $A$ runs over $S(G)$. Note that if the multiplicity of an eigenvalue $\lambda$ is $k$ for some matrix $A$ in $S(G)$, then the nullity of $A-\lambda I$ is $k$ which implies 
$\mr(G)\leq \rank(A-\lambda I)= n-k.$
So we can conclude that
\[ M(G)=\displaystyle\max_{A\in S(G)}\nullity(A)=n-\mr(G). \]

There is a lot of interest in determining the maximum multiplicity of eigenvalues of matrices whose graph is given \cite{vibrationtrees74, johnsonduarte99, johnsonduarte02, johnsonsaiago02, nylenminrank96, parter60, wiener84}. The {\it path cover number} of a graph $G$, denoted by $P(G)$, is the minimum number of vertex-disjoint paths needed as induced subgraphs of $G$ that cover all the vertices of $G$. Duarte and Johnson in their 1999 paper \cite{johnsonduarte99} introduced a graph parameter $\Delta(T)$ for a tree $T$ to be 
\begin{align*}
\Delta(T) := \max \{ p - q \ST &\text{there exist } q \text{ vertices of } T \text{ whose } \\& \text{deletion leaves } p \text{ vertex-disjoint paths} \},
\end{align*} 
and showed that $\Delta(T)$ is equal to $M(T)$ and $P(T)$:

\begin{theorem}\label{tree}\cite{johnsonduarte99}
For all trees $T$, $M(T)=P(T)=\Delta(T)$.
\end{theorem}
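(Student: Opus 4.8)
The plan is to establish the cyclic chain $\Delta(T)\le M(T)\le P(T)\le \Delta(T)$, which immediately forces equality throughout. Since replacing $A$ by $A-\lambda I$ leaves $G(A)$ unchanged, I may always normalize the eigenvalue under consideration to be $0$, so that $M(T)=\max_{A\in S(T)}\nullity(A)$, and I will use this freely.

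To get $\Delta(T)\le M(T)$, I would argue by an explicit construction. Fix a set $S$ of $q$ vertices whose deletion leaves vertex-disjoint induced paths $C_1,\dots,C_p$ with $p-q=\Delta(T)$. On each $C_i$ place a singular irreducible tridiagonal matrix $B_i$: such a matrix exists because irreducible tridiagonal (Jacobi) matrices realize every prescribed spectrum, the eigenvalue $0$ is then simple, and its eigenvector $y^{(i)}$ has all coordinates nonzero. Assign arbitrary nonzero values to the diagonal entries at $S$ and to the off-diagonal entries of the edges meeting $S$, obtaining a symmetric matrix $A$ with $G(A)=T$. For scalars $c_1,\dots,c_p$ let $x$ equal $c_iy^{(i)}$ on $C_i$ and $0$ on $S$. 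Every equation of $Ax=0$ indexed by a vertex of some $C_i$ holds automatically (the $S$-terms vanish because $x$ is $0$ on $S$, and the rest is $c_i$ times an entry of $B_ix|_{C_i}=0$), while each of the $q$ equations indexed by a vertex of $S$ is a single linear condition on $(c_1,\dots,c_p)$. Hence the solutions form a subspace of dimension $\ge p-q$, and as $(c_i)\mapsto x$ is injective we get $\nullity(A)\ge p-q=\Delta(T)$.

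For $P(T)\le\Delta(T)$ I would induct on $|V(T)|$, using a longest path $v_0v_1\cdots v_\ell$ of $T$; maximality forces every neighbour of $v_1$ other than $v_0$ and $v_2$ to be a leaf. If $v_1$ has leaf neighbours $w_1,\dots,w_s$ with $s\ge2$, delete $v_1$: this leaves the singletons $\{w_1\},\dots,\{w_s\}$ together with the subtree $T''$ containing $v_2$, and one checks $P(T)\le(s-1)+P(T'')$ (cover $T''$ optimally, use the path $w_1v_1w_2$, keep $w_3,\dots,w_s$ as singletons) and $\Delta(T)\ge(s-1)+\Delta(T'')$ (adjoin $v_1$ to a set witnessing $\Delta(T'')$), so the inductive hypothesis $P(T'')\le\Delta(T'')$ finishes. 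If the only leaf neighbour of $v_1$ is $v_0$, then $v_0v_1$ is a pendant $P_2$; deleting $v_0$ one checks $P(T)=P(T-v_0)$ and $\Delta(T)\ge\Delta(T-v_0)$, and induction again applies. Stars and the one-vertex tree are handled directly. (The reverse inequality $\Delta(T)\le P(T)$ is in fact immediate: $P(G-v)\le P(G)+1$ for every vertex, so iterating over the $q$ deleted vertices gives $p\le P(T)+q$.)

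The remaining inequality $M(T)\le P(T)$ is the heart of the proof; I would show $\nullity(A)\le P(T)$ for every $A\in S(T)$ by induction on $|V(T)|$. If the multiplicity $m$ of $0$ is at most $1$ there is nothing to prove, since $P(T)\ge1$. If $m\ge2$, invoke the Parter--Wiener phenomenon (cf.\ \cite{parter60, wiener84}): there is a vertex $v$ at which the multiplicity of $0$ strictly increases, i.e.\ $0$ has multiplicity $m+1$ in $A[T-v]$. Writing $T-v$ as the disjoint union of subtrees $T_1,\dots,T_d$ on which $0$ has multiplicities $m_1,\dots,m_d$ with $\sum_i m_i=m+1$, the inductive hypothesis gives $m_i\le P(T_i)$, whence $m+1\le\sum_iP(T_i)=P(T-v)\le P(T)+1$, that is $m\le P(T)$; combining the three steps yields $M(T)=P(T)=\Delta(T)$. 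I expect the real obstacle to be the input to this last step, the existence of a Parter vertex: interlacing only tells us that a single vertex deletion changes the multiplicity of $0$ by at most one, and forcing it to \emph{increase} somewhere requires a careful analysis of the null space of $A$ in terms of the branches at a well-chosen vertex. (A direct alternative---peeling a pendant path $Q$ off $T$ and using that $A[Q]$ is irreducible tridiagonal, hence of nullity $\le1$ with a null vector nonvanishing at the endpoints of $Q$---runs into the same difficulty, now as a case split according to whether $Q$ meets the rest of $T$ at one of its own endpoints or at an interior vertex.)
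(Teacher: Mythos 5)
The paper does not prove this statement at all --- it is quoted verbatim from Johnson and Leal Duarte \cite{johnsonduarte99} --- so there is no internal proof to compare against; your argument should be judged on its own, and it is essentially sound, following the same overall route as the cited source: the cyclic chain $\Delta(T)\le M(T)\le P(T)\le\Delta(T)$. Your construction for $\Delta(T)\le M(T)$ is correct (it is the standard argument behind Theorem \ref{knownbounds}); note only that your claim that the null vector of an irreducible tridiagonal matrix has \emph{all} coordinates nonzero is false in general (e.g.\ the eigenvector $(1,0,-1)$ for the eigenvalue $0$ of the path on three vertices) --- but this claim is never used, since injectivity of $(c_1,\dots,c_p)\mapsto x$ only needs $y^{(i)}\neq 0$ and the pattern of $A$ is guaranteed by the construction, so the slip is harmless. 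Your induction for $P(T)\le\Delta(T)$ via a longest path checks out, including the two facts you leave as ``one checks'': in the second case $v_1$ has degree $2$, so it is a leaf of $T-v_0$ and hence an endpoint of any covering path or path-component containing it, which is exactly what makes $P(T)=P(T-v_0)$ and $\Delta(T)\ge\Delta(T-v_0)$ go through. The step $M(T)\le P(T)$ is correct as written but rests entirely on the Parter--Wiener theorem, which you invoke as a black box; you flag this honestly, and since that result is classical, logically independent of the statement being proved, and cited in this paper's own references \cite{parter60,wiener84}, treating it as known input is legitimate here --- just be aware that a self-contained proof of Theorem \ref{tree} would require proving the existence of a Parter vertex (or some substitute branch/null-space analysis), which is the genuinely nontrivial core that both you and the original paper delegate.
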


The definition  of $\Delta$ can be extended to any graph $G$. The proof of Duarte and Johnson shows that for any graph $G$, $\Delta(G)$ is a lower bound for $P(G)$ and $M(G)$:

\begin{theorem}\cite{bfh04,johnsonduarte99}\label{knownbounds}
For all graphs $G$, $\Delta(G)\leq P(G)$ and $\Delta(G)\leq M(G)$.
\end{theorem}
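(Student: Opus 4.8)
The plan is to prove both inequalities in essentially the same way, by ``peeling off'' the $q$ deleted vertices one at a time and controlling how $P$ and $M$ change at each step. Fix a graph $G$ on $n$ vertices and a set $W=\{v_1,\dots,v_q\}$ of vertices whose removal leaves a disjoint union of exactly $p$ paths; since $\Delta(G)$ is the maximum of $p-q$ over all such choices of $W$, it suffices to show $P(G)\ge p-q$ and $M(G)\ge p-q$ for this fixed $W$ and then take the maximum.

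The first ingredient is a pair of single-vertex-deletion estimates: for any vertex $v$ of any graph $H$, $P(H)\ge P(H-v)-1$ and $M(H)\ge M(H-v)-1$. For the path-cover bound, take a minimum path cover of $H$; deleting $v$ breaks the (induced) path of the cover that contains $v$ into at most two induced paths of $H-v$, so we obtain a path cover of $H-v$ with at most $P(H)+1$ paths, i.e.\ $P(H-v)\le P(H)+1$. For the multiplicity bound, choose $A'\in S(H-v)$ with $\nullity(A')=M(H-v)$ and extend it to a matrix $A\in S(H)$ by appending one row and one column indexed by $v$ whose off-diagonal entries are nonzero exactly on the edges of $H$ incident to $v$ (and, say, $0$ on the diagonal); appending a row raises the rank by at most $1$ and appending a column raises it by at most $1$, so $\mr(H)\le\rank(A)\le\rank(A')+2$, which rearranges to $M(H)\ge M(H-v)-1$. (Equivalently, $A'$ is a principal submatrix of $A$ of co-size one, and Cauchy interlacing gives $\nullity(A)\ge\nullity(A')-1$ for any such extension.)

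The second ingredient is the base case: if $H$ is a disjoint union of $p$ paths, then $P(H)=p$ and $M(H)=p$. The path-cover statement is immediate, since each connected component must lie in its own path of any cover and one path per component suffices. For the multiplicity statement, $\mr$ is additive over connected components (a matrix in $S(H)$ is block diagonal), and $\mr(P_k)=k-1$, so $M(P_k)=1$ for every path and hence $M(H)=p$.

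Assembling these, delete $v_1,\dots,v_q$ from $G$ in turn: by the first ingredient each deletion lowers $P$ (respectively $M$) by at most $1$, and after all $q$ deletions we reach $G-W$, a disjoint union of $p$ paths, which has $P=M=p$ by the second ingredient. Hence $P(G)\ge p-q$ and $M(G)\ge p-q$, and maximizing over $W$ gives $P(G)\ge\Delta(G)$ and $M(G)\ge\Delta(G)$. The only genuine content here is the estimate $M(H)\ge M(H-v)-1$ — that passing to a principal submatrix of co-size one drops the nullity by at most one — together with the small but necessary point that the one-row-and-column extension of $A'$ must be built so that it actually lies in $S(H)$, with the new row and column carrying exactly the zero--nonzero pattern prescribed by the edges at $v$; everything else is bookkeeping.
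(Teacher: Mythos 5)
Your proof is correct. Note that the paper does not actually prove Theorem \ref{knownbounds} -- it is quoted with citations to Johnson--Duarte and Barioli--Fallat--Hogben -- so there is no in-paper argument to compare against; your vertex-by-vertex peeling, using $P(H)\ge P(H-v)-1$ (splitting one path of a minimum cover) and $M(H)\ge M(H-v)-1$ (bordering a nullity-maximizing matrix and using the rank/interlacing bound for a co-size-one principal submatrix), is essentially the standard argument from the cited sources, done one vertex at a time instead of deleting all $q$ vertices at once, and it is also the same principal-submatrix technique the paper itself uses to prove Theorem \ref{delta}.
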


Later in 2004 Barioli, Fallat, and Hogben \cite{bfh04} pushed the results further and provided an algorithm to compute $\Delta$. Note from Theorem \ref{knownbounds} that $M(G)$ and $P(G)$ are both upper bounds for $\Delta(G)$. But they have no relationship in general. This was observed by Barioli, Fallat, and Hogben \cite[Figures 1 and 2]{bfh05} in the following examples: For the wheel graph $W_5$ we have \[P(W_5)=2<3=M(W_5)\] and for the $5$-sun $H_5$ we have \[M(H_5)=2<3=P(H_5).\]

From the definition of $M(G),P(G)$, and $\Delta(G)$, it follows that they can be computed componentwise for a disconnected graph.
\begin{obs}\label{components}
Let $G$ be a graph with $k$ connected components $G_1$, $G_2$, $\ldots$, $G_k$. Then 
$M(G)=\displaystyle\sum_{i=1}^k M(G_i)$, 
$P(G)=\displaystyle\sum_{i=1}^k P(G_i)$, and 
$\Delta(G)=\displaystyle\sum_{i=1}^k \Delta(G_i)$.
\end{obs}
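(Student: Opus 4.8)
The plan is to prove all three identities by the same template. For a parameter $f\in\{M,P,\Delta\}$ I would first show that an optimal configuration witnessing $f(G)$ decomposes across the components, giving $f(G)\le\sum_{i=1}^k f(G_i)$, and then show that optimal configurations for the individual $G_i$ can be pasted together into one for $G$, giving the reverse inequality $f(G)\ge\sum_{i=1}^k f(G_i)$.

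For $P$, the one fact I need is that a connected subgraph of $G$ — in particular an induced path — is contained in a single connected component. Hence any vertex-disjoint family of induced paths covering $V(G)$ splits, according to the component each path lies in, into vertex-disjoint families of induced paths covering $V(G_1),\dots,V(G_k)$; applying this to a minimum path cover of $G$ gives $P(G)\ge\sum_{i=1}^k P(G_i)$. Conversely, the disjoint union of minimum path covers of the $G_i$ is a path cover of $G$, so $P(G)\le\sum_{i=1}^k P(G_i)$.

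For $\Delta$ I would argue similarly. If deleting a set $S$ of $q$ vertices of $G$ leaves a disjoint union of $p$ paths, then, writing $q_i=|S\cap V(G_i)|$ and $p_i$ for the number of the resulting paths lying in $G_i$, the induced subgraph $G_i-(S\cap V(G_i))$ is again a disjoint union of $p_i$ paths, so $p_i-q_i\le\Delta(G_i)$; summing over $i$ yields $p-q\le\sum_{i=1}^k\Delta(G_i)$, hence $\Delta(G)\le\sum_{i=1}^k\Delta(G_i)$. For the reverse inequality I would choose, in each $G_i$, a vertex set $S_i$ of size $q_i$ whose deletion leaves exactly $p_i=q_i+\Delta(G_i)$ paths, and observe that deleting $\bigcup_i S_i$ from $G$ leaves $\sum_i p_i$ paths after removing $\sum_i q_i$ vertices; this configuration witnesses $\Delta(G)\ge\sum_{i=1}^k(p_i-q_i)=\sum_{i=1}^k\Delta(G_i)$.

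For $M$, I would order the vertices of $G$ component by component, so that every $A\in S(G)$ is a direct sum $A_1\oplus\cdots\oplus A_k$ with $A_i\in S(G_i)$ (the off-diagonal entries linking distinct components must be $0$), and conversely every such direct sum lies in $S(G)$. Since $\nullity(A_1\oplus\cdots\oplus A_k)=\sum_{i=1}^k\nullity(A_i)$ and the blocks vary independently, maximizing nullity over $S(G)$ amounts to maximizing each block separately, which gives $M(G)=\sum_{i=1}^k M(G_i)$; equivalently one may invoke $M(G)=n-\mr(G)$ together with the additivity of rank over a direct sum. I do not anticipate a genuine obstacle here: the only points that deserve a sentence of justification are the component-confinement of connected subgraphs used in the $P$ and $\Delta$ arguments and the block-diagonal form of matrices in $S(G)$ used in the $M$ argument, both immediate from the relevant definitions.
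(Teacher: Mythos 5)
Your proposal is correct: the paper states this as an observation that ``follows from the definitions'' and gives no proof, and your component-wise verification (block-diagonal structure of matrices in $S(G)$ with additivity of nullity for $M$, and confinement of induced paths and deletion sets to single components for $P$ and $\Delta$) is exactly the routine argument the paper intends. The only detail worth a passing word is the degenerate case in the $\Delta$ argument where a deletion set swallows an entire component, which is harmless since $\Delta(G_i)\geq 2-|V(G_i)|$ for every nonempty $G_i$.
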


Using the preceding observation, Theorem \ref{tree} can be extended to forests.

\begin{theorem}\label{forest}
If $G$ is a forest, then $\Delta(G) = P(G) = M(G)$.
\end{theorem}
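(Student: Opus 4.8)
The plan is to deduce this statement immediately from Theorem~\ref{tree} together with Observation~\ref{components}, exploiting the fact that a forest is precisely a disjoint union of trees. Concretely, let $G$ be a forest and let $G_1, G_2, \ldots, G_k$ be its connected components. Each $G_i$ is connected and acyclic, hence a tree; an isolated vertex is here the trivial tree $K_1$, for which $M(K_1) = P(K_1) = \Delta(K_1) = 1$, so no component requires special treatment.

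Next I would invoke Theorem~\ref{tree} on each component to get $\Delta(G_i) = P(G_i) = M(G_i)$ for every $i \in \{1, \ldots, k\}$, and then combine this with the componentwise additivity of all three parameters from Observation~\ref{components}. Summing over $i$ gives
\[
\Delta(G) = \sum_{i=1}^k \Delta(G_i) = \sum_{i=1}^k P(G_i) = P(G)
\qquad\text{and}\qquad
\Delta(G) = \sum_{i=1}^k \Delta(G_i) = \sum_{i=1}^k M(G_i) = M(G),
\]
which is exactly the desired chain of equalities $\Delta(G) = P(G) = M(G)$.

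I do not anticipate any genuine obstacle here: essentially all of the mathematical content is carried by Theorem~\ref{tree}, and the passage from a single tree to an arbitrary forest is handled entirely by the componentwise additivity recorded in Observation~\ref{components}. The only mildly delicate point underlying that observation — which I would treat as already established — is that the extended definition of $\Delta$ is additive over connected components, i.e.\ that an optimal set of $q$ deleted vertices for $G$ may be taken as a union of optimal sets within the individual $G_i$, since deleting a vertex in one component alters neither the vertex set nor the induced-path structure of the others. Given Observation~\ref{components}, nothing beyond the above is needed.
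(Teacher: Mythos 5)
Your argument is correct and is exactly the paper's route: the paper derives Theorem~\ref{forest} from Theorem~\ref{tree} applied to each tree component together with the componentwise additivity of $M$, $P$, and $\Delta$ recorded in Observation~\ref{components}. Your remark about isolated vertices being the trivial tree $K_1$ is a harmless extra detail; nothing further is needed.
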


Note that the converse of Theorem \ref{forest} is not true. 
\begin{example}\label{unicyclic} Consider the unicyclic graph $G$ in Figure \ref{unicyclicgraph}. We can verify that $M(G)=P(G)=\Delta(G)=2$. 

The preceding example shows that the equalities in Theorem \ref{knownbounds} occur for some graphs with cycles in addition to trees. Indeed, for any graph $G$ in the the following infinite family of unicyclic graphs (see Figure \ref{unicyclicclass}) we have $M(G)=P(G)=\Delta(G)=2$. 

Let $P$ be a path on at least 5 vertices. Pick any three non-pendant consecutive vertices on $P$, say $u, v$ and $w$. Now $G$ is obtained from $P$ by appending a path of length at least 2 from $u$ to $w$. Clearly $P(G)=2$. Deleting $u$ and $w$ we have $\Delta(G) =4-2= 2$. Furthermore, note that each $G$ on $n$ vertices in this family  has an induced $P_{n-1}$. Hence $\mr(G) \geq \mr(P_{n-1}) = n-2$. But $\mr(G) < n -1$, since $G$ is not a path \cite[Cor 1.5]{minrank07}. This shows $\mr(G) = n-2$, thus $M(G) = 2$.
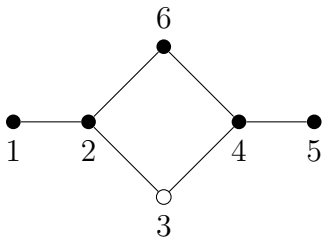
\begin{figure}
\begin{center}
\begin{tikzpicture}[colorstyle/.style={circle, fill, black, scale = .5}]
	
	\node (6) at (0,1)[colorstyle, label=above:$6$]{};
	\node (3) at (0,-1)[circle, fill, white, scale = .5]{};	
	\node (3) at (0,-1)[circle, draw, black, scale = .5, label=below:$3$]{};
	\node (2) at (-1,0)[colorstyle, label=below:$2$]{};
	\node (1) at (-2,0)[colorstyle, label=below:$1$]{};
	\node (4) at (1,0)[colorstyle, label=below:$4$]{};
	\node (5) at (2,0)[colorstyle, label=below:$5$]{};

	\draw [] (1)--(2)--(3)--(4)--(5);
	\draw [] (4)--(6)--(2);
\end{tikzpicture}
\caption{Graph $G$ with $M(G)=P(G)=\Delta(G)=2$}\label{unicyclicgraph}
\end{center}
\end{figure}
\end{example}

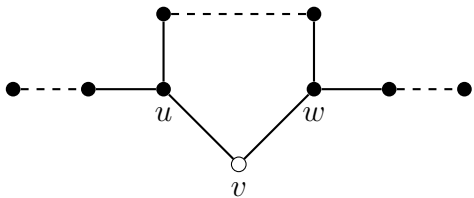
\begin{figure}
\begin{center}
\begin{tikzpicture}[colorstyle/.style={circle, fill, black, scale = .5}]
	
	\node (6) at (-1,1)[colorstyle]{};
	\node (8) at (1,1)[colorstyle]{};
	\node (3) at (0,-1)[circle, fill, white, scale = .5]{};	
	\node (3) at (0,-1)[circle, draw, black, scale = .5, label=below:$v$]{};
	\node (2) at (-1,0)[colorstyle, label=below:$u$]{};
	\node (1) at (-2,0)[colorstyle]{};
	\node (0) at (-3,0)[colorstyle]{};
	\node (4) at (1,0)[colorstyle, label=below:$w$]{};
	\node (5) at (2,0)[colorstyle]{};
	\node (7) at (3,0)[colorstyle]{};

	\draw[thick] (1)--(2)--(3)--(4)--(5);
	\draw[thick] (2)--(6);
	\draw[thick] (4)--(8);
	\draw[thick,dashed] (6)--(8);
	\draw[thick,dashed] (0)--(1);
	\draw[thick,dashed] (5)--(7);
\end{tikzpicture}
\caption{Graph $G$ with $M(G)=P(G)=\Delta(G)=2$}\label{unicyclicclass}
\end{center}
\end{figure}

	
%
	
%

In 2007 Fernandes \cite{fernandes07} expressed $M(G)$ for some unicyclic graphs $G$ in terms of certain graph parameters. In 2008 AIM Minimum Rank Work Group \cite{aim08} introduced the zero forcing number $Z(G)$ for a graph $G$ and proved that $M(G)\leq Z(G)$ for all graphs $G$, where the equality holds for forests.
In this article we introduce new combinatorial bounds for $M(G)$. Motivated by the definition of $\Delta(G)$, in Section \ref{sectiondeltaplus} we introduce a graph parameter $\Delta^+(G)$ in terms of path covers of $G$ and show that \[\Delta(G) \leq M(G)\leq \Delta^+(G),\] for all graphs $G$. Then in Section \ref{sectionteeplusandteeminus} we introduce two more parameters $T^-(G)$ and $T^+(G)$ in terms of tree covers of $G$, and show that \[T^-(G) \leq M(G)\leq T^+(G),\] for all graphs $G$ and that the bounds are sharp. In Section \ref{sectioncomputing} we reduce the computation time for $T^-$ and $T^+$ by finding an optimal set of vertices of small size.
Finally we pose some open problems in Section \ref{sectionproblems}.

\section{Graph Invariant $\Delta^+(G)$}\label{sectiondeltaplus}
For a graph $G$, we define $\Delta^+(G)$ to be the minimum of $p+q$ when deletion of $q$ vertices from $G$ leaves $p$ vertex-disjoint paths. 
\begin{obs}
For any graph $G$ on $n$ vertices, $\Delta^+(G) \leq n$.
\end{obs}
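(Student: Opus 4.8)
The plan is to exhibit a single choice of $q$ and $p$ that witnesses $p+q \le n$, which is essentially forced by the definition. Take $q = 0$ (delete no vertices) and $p = n$: then the $n$ isolated vertices remaining are $n$ vertex-disjoint paths (each $P_1$), each an induced subgraph of $G$. Hence the pair $(p,q) = (n,0)$ is admissible in the optimization defining $\Delta^+(G)$, so $\Delta^+(G) \le p + q = n$.

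The only point that needs a word of justification is that single vertices count as paths in this context; the paper's convention (consistent with the path cover number $P(G)$, where a path on one vertex is allowed) is that $P_1$ is a path, so the empty deletion always leaves $n$ vertex-disjoint induced paths. One could alternatively observe that $P(G) \le n$ always and that taking $q=0$ recovers exactly the path cover bound, giving $\Delta^+(G) \le P(G) \le n$.

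There is no real obstacle here; the statement is an immediate sanity check on the definition, analogous to the trivial bound $M(G) \le n$. The one thing to be careful about is not to conflate $\Delta^+$ with $\Delta$: for $\Delta$ one maximizes $p - q$, and the empty deletion gives $\Delta(G) \ge n$ would be false, so the roles of the trivial witness are different. For $\Delta^+$, minimizing $p+q$, the empty-deletion witness gives precisely the upper bound claimed.
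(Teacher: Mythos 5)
There is a genuine gap: your witness $(p,q)=(n,0)$ is not admissible under the paper's definition of $\Delta^+$. In this paper (following the Duarte--Johnson parameter $\Delta$), ``deletion of $q$ vertices leaves $p$ vertex-disjoint paths'' means that the graph $G\setminus S$ \emph{is} a disjoint union of $p$ paths, i.e.\ every component of $G\setminus S$ is a path; it does not mean that one can choose $p$ vertex-disjoint induced paths covering the remaining vertices. With $q=0$ nothing is deleted, so the remaining graph is $G$ itself with all of its edges, and the $n$ single vertices are not ``isolated'' unless $G$ is edgeless; your pair is admissible only for graphs with no edges. The paper's own usage forces this reading: in the proof of Theorem \ref{delta} the blocks $B_1\oplus\cdots\oplus B_p$ must form a principal submatrix of $A$, which requires no edges between the paths; Example \ref{genstar} exhibits a tree with $\Delta^+(G)=3>2=P(G)$; and the graph of Figure \ref{eichgraph} has $\Delta^+(G)=4>2=P(G)=T^+(G)$. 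For the same reason your fallback claim $\Delta^+(G)\leq P(G)$ is false in general --- the correct inequality runs the other way, $P(G)\leq T^+(G)\leq \Delta^+(G)$ (Proposition \ref{prop1}(b)); the parameter you are implicitly using (minimum over $S$ of $P(G\setminus S)+|S|$) is $T^+$, not $\Delta^+$.

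The statement itself is easy to salvage, but by a different witness or by the paper's counting argument. A correct trivial witness is $S$ consisting of all vertices but one: then $q=n-1$ and the remaining graph is a single $P_1$, so $\Delta^+(G)\leq (n-1)+1=n$. The paper instead takes an optimal set $S$ of $q$ vertices and notes that each of the $p$ remaining paths has at least one vertex, so $p\leq n-q$ and $\Delta^+(G)=p+q\leq n$; note that this argument also implicitly needs some admissible set to exist, which the witness above supplies.
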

\begin{proof}
Let $S$ be an optimal set of $q$ vertices for $\Delta^+(G)$. That is, deleting the $q$ vertices in $S$ from $G$ leaves $p$ disjoint paths such that $p+q$ is minimum. Since each path has at least one vertex, $p\leq n-q$ and  then $\Delta^+(G) = p + q \leq n$. 
\end{proof}

The following examples compute $\Delta^+$ for some families of graphs.

\begin{example}\label{starex}
For the star $S_n$ on $n \geq 4$ vertices, $M(S_n) = \Delta^+(S_n) = n-2$. 

Note that $\mr(S_n) = 2$ \cite[Obs 1.2]{minrank07} which implies $M(S_n) = n-2$. Also deleting $n-3$ pendant vertices from a star leaves a path, viz., $P_3$. Hence $\Delta^+(S_n)=1+(n-3) = n-2$.
\end{example}

\begin{example}\label{cycle}
For the cycle $C_n$ on $n$ vertices, $M(C_n) = \Delta^+(C_n) = 2$. 

Note that $\mr(C_n) = n-2$ \cite[Obs 1.6]{minrank07}, hence $M(C_n) = 2$. Also note that to get paths induced in $C_n$, we need to delete at least one vertex. If deletion of $q\geq 1$ vertices from $C_n$ gives $p$ paths $P_{n_1},\ldots,P_{n_p}$, then $1\leq p\leq q$. Thus $p+q\geq 2$, where equality holds if and only if the number of optimal vertices deleted is $1$. Thus $\Delta^+(C_n) = 1+1=2$.
\end{example}

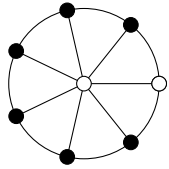
\begin{figure}[h]
\begin{center}
\begin{tikzpicture}
	\def \n {7}
	\def \radius {1cm}

  	\draw[-] (0:\radius) arc (0:360:\radius); 

	\foreach \s in {1,...,\n} 
	{
  	\draw[-] (0,0) -- ({360/\n * (\s )}:{\radius});
  	\node[draw, circle, fill, black, scale = .5] at ({360/\n * (\s - 1)}:\radius) {};
 	}
 			
	\node[circle, fill, white, scale = .5] at (0,0) {}; 
	\node[draw, black, circle, scale = .5] at (0,0) {}; 
	
	\node[circle, fill, white, scale = .5] at (0:\radius) {};
	\node[draw, black, circle, scale = .5] at (0:\radius) {};
	
\end{tikzpicture}
\end{center}
\caption{Wheel graph $W_8$}\label{figwheel}
\end{figure}
\begin{example}\label{wheelex}
For the wheel $W_n$ on $n\geq 4$ vertices, $M(W_n) = \Delta^+(W_n) = 3$. 

Note that $\mr(W_n)\geq \mr(C_{n-1})=n-3$, but it cannot be more than $n-3$ since it is neither a path nor a 2-connected linear 2-tree \cite[Cor 1.5, Thm 2.26]{minrank07}. Hence $\mr(W_{n}) = n-3$ and consequently $M(W_n) = 3$. For $\Delta^+(W_n)$, delete the  vertex of degree $n-1$ and another vertex of degree 3 (see white vertices in Figure \ref{figwheel}) to get $P_{n-2}$. So we have $\Delta^+(W_n)=1+2=3$.
\end{example}

\begin{figure}[h]
\begin{center}
\begin{tikzpicture}
	\def \n {9}
	\def \radius {.7cm}

  	\draw[-] (0:\radius) arc (0:360:\radius); 

	\foreach \s in {1,...,\n} 
	{
  	\draw[-] ({360/\n * (\s )}:{2*\radius}) -- ({360/\n * (\s )}:{\radius});
  	\node[draw, circle, fill, black, scale = .5] at ({360/\n * (\s - 1)}:\radius) {};
  	\node[circle, fill, white, scale = .5] at ({360/\n * (\s - 1)}:{2*\radius}) {};
  	\node[draw, black, circle, scale = .5] at ({360/\n * (\s - 1)}:{2*\radius}) {};
 	}
	
	\node[circle, fill, white, scale = .5] at (0:\radius) {};
	\node[draw, black, circle, scale = .5] at (0:\radius) {};
	
	\node[draw, circle, fill, black, scale = .5] at (0:{2*\radius}) {};
	\node[draw, circle, fill, black, scale = .5] at ({360/\n * (2 - 1)}:{2*\radius}) {};
	\node[draw, circle, fill, black, scale = .5] at ({360/\n * (\n - 1)}:{2*\radius}) {};

\end{tikzpicture}
\end{center}
\caption{The 9-sun $H_9$}\label{figsun}
\end{figure}
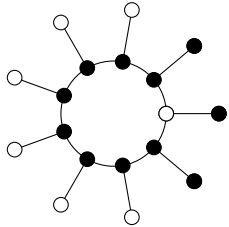
\begin{example}
Let $H_n$ be the $n$-sun. Then
\begin{enumerate}
\item[(a)] $M(H_n)=\left\{\begin{array}{cl}
2&\text{ if } n=3\\
\displaystyle\floor{\frac{n}{2}}&\text{ if } n\geq 4
\end{array}\right.$ 
\item[(b)] $\Delta^+(H_n) \leq n \text{ for } n\geq 3. $
\end{enumerate}

Part (a) is shown in \cite[Prop 3.1]{bfh05}. For part (b), consider $H_3$ first. Note that deleting a vertex from the cycle leaves two paths, viz., $P_1$ and $P_4$. Thus $\Delta^+(H_3) \leq 2 + 1 =3$. Now consider $H_n$ where $n\geq 4$. Note that deleting a vertex from the cycle and all the pendant vertices that are at distance more than 2 from it (i.e., a total of $n-2$ vertices)  leaves two paths, viz., $P_1$ and $P_{n+1}$. Thus $\Delta^+(H_n) \leq 2 + (n-2) = n$. For an example see the 9-sun $H_9$ in Figure \ref{figsun}.
\end{example}

Next we show that $\Delta^+(G)$ is an upper bound for $M(G)$ for any graph $G$. First we need the following lemma derived from the definition of $M(G)$.
\begin{lemma}\label{mult}
Let $G$ be a graph on $n$ vertices. Then for all $A\in S(G)$, $$\rank(A)\geq \mr(G)=n-M(G).$$
\end{lemma}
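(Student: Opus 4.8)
The plan is to unwind the definitions, since the statement is essentially a restatement of facts already recorded in the introduction. First I would recall that $\mr(G)$ is \emph{defined} as $\min_{B \in S(G)} \rank(B)$; hence for the particular matrix $A \in S(G)$ we immediately get $\rank(A) \geq \mr(G)$, with no further argument needed.

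Next I would invoke the identity $M(G) = n - \mr(G)$ established in the introduction (via the observation that an eigenvalue of multiplicity $k$ for $A \in S(G)$ forces $\nullity(A - \lambda I) = k$, so $\mr(G) \leq \rank(A - \lambda I) = n - k$, and $A - \lambda I \in S(G)$ since shifting by a scalar matrix does not change off-diagonal entries). Rearranging gives $\mr(G) = n - M(G)$, which supplies the second equality in the displayed inequality.

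Combining the two observations yields $\rank(A) \geq \mr(G) = n - M(G)$ for every $A \in S(G)$, as claimed. There is no real obstacle here; the only subtlety worth stating explicitly is that $A - \lambda I$ lies in $S(G)$ for every scalar $\lambda$, which is what lets the nullity of a shift be bounded by the minimum rank of $G$, and this is already implicit in the introduction's derivation of $M(G) = n - \mr(G)$. I would therefore keep the proof to two or three sentences.
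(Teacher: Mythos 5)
Your proof is correct and is exactly the intended argument: the paper gives no separate proof of this lemma, presenting it as an immediate consequence of the definition of $\mr(G)$ together with the identity $M(G)=n-\mr(G)$ derived in the introduction, which is precisely what you unwind (including the worthwhile remark that $A-\lambda I\in S(G)$).
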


\begin{theorem}\label{delta}
For all graphs $G$, \[M(G)\leq \Delta^+(G).\] 
\end{theorem}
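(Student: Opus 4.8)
The plan is to fix an arbitrary $A\in S(G)$ together with an arbitrary set $S$ of $q$ vertices whose deletion leaves $p$ vertex-disjoint paths, and to show directly that $\nullity(A)\le p+q$. Since $M(G)=\max_{A\in S(G)}\nullity(A)$ and $\Delta^+(G)$ is the minimum of $p+q$ over all such choices, this immediately yields $M(G)\le\Delta^+(G)$.

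First I would pass to the principal submatrix $A'$ of $A$ indexed by the $n-q$ vertices outside $S$. Then $A'\in S(G-S)$, and by hypothesis $G-S$ is a forest which is a disjoint union of $p$ paths, so by Observation~\ref{components} its path cover number is $P(G-S)=p$; hence Theorem~\ref{forest} gives $M(G-S)=p$ (equivalently, one may just invoke the classical fact $\mr(P_k)=k-1$ componentwise). Applying Lemma~\ref{mult} to $A'$, viewed as a matrix of order $n-q$ with graph $G-S$, gives $\nullity(A')\le M(G-S)=p$.

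The remaining, and main, point is that forming the principal submatrix $A'$ — which deletes $q$ rows and the corresponding $q$ columns — decreases the nullity by at most $q$, i.e. $\nullity(A)\le\nullity(A')+q$. It suffices to prove this for a single deletion and iterate. If $C$ is symmetric and $B$ is obtained from $C$ by deleting its last row and column, choose a subspace $W\subseteq\ker C$ with $\dim W\ge\nullity(C)-1$ on which the last coordinate vanishes (the last-coordinate functional restricted to $\ker C$ has rank at most $1$); writing such a vector as $(\bar w,0)$ and using the $2\times 2$ block form of $C$, one checks that $C(\bar w,0)^{\!\top}=0$ forces $B\bar w=0$, and $w\mapsto\bar w$ is injective on $W$, so $\nullity(B)\ge\nullity(C)-1$. (Equivalently, this is the Cauchy interlacing theorem applied to the eigenvalue $0$.) Combining the two estimates, $\nullity(A)\le\nullity(A')+q\le p+q$; maximizing over $A$ and minimizing over $S$ gives $M(G)\le\Delta^+(G)$.

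The one nontrivial ingredient is the interlacing estimate $\nullity(A)\le\nullity(A')+q$ for principal submatrices; everything else is a direct assembly of the definitions with Theorem~\ref{forest} and Lemma~\ref{mult}. I would take care to state the single-row-and-column deletion bound cleanly before iterating, and note in particular that the analogous inequality for $\rank$ is false (e.g. the $2q\times2q$ matrix with off-diagonal identity blocks has full rank while a $q\times q$ principal submatrix is zero), so the argument must genuinely be run on nullities rather than on ranks.
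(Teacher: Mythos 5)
Your proof is correct and is essentially the paper's argument in dual (nullity) form: your key estimate $\nullity(A)\leq \nullity(A')+q$ is, via rank--nullity, exactly the paper's inequality $\rank(A)\geq \rank(B_1\oplus\cdots\oplus B_p)$ for the principal submatrix on the vertices outside $S$, and your bound $\nullity(A')\leq M(G\setminus S)=p$ is the paper's $\rank(B_i)\geq n_i-1$ summed over the $p$ paths. (Only your closing caveat is off as a contrast: the argument can be, and in the paper is, run on ranks --- one bounds $\rank(A)$ from below by $\rank(A')$ rather than from above by $\rank(A')+q$, which is the same fact your kernel argument proves.)
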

\begin{proof} 
Let $G$ be a graph on $n$ vertices. To show $M(G)\leq \Delta^+(G)$, we show that $\mr(G)=n-M(G)\geq n-\Delta^+(G)$. Let $A\in S(G)$. It suffices to show that $\rank(A)\geq n-\Delta^+(G)$.  Let $P_{n_1},\ldots,P_{n_p}$ be the vertex-disjoint paths remaining after deletion of an optimal $q$ vertices from $G$ such that $\Delta^+(G)=p+q$ where $n-q=n_1+\cdots+n_p$. For $i=1,\ldots,p$, let $B_i$ be the principle submatrix of $A$ such that the graph of $B_i$ is $P_{n_i}$. So $B_1\oplus\cdots\oplus B_p$ is an $(n-q)\times (n-q)$ principle submatrix of $A$ and  
$$\rank(A)\geq \rank(B_1\oplus\cdots\oplus B_p)=\sum_{i=1}^p\rank(B_i).$$

By Theorem \ref{tree}, $M(P_{n_i})=1$ for $i=1,\ldots,p$.
By Lemma \ref{mult}, $\rank(B_i)\geq |P_{n_i}|-M(P_{n_i})=n_i-1$ for $i=1,\ldots,p$. Thus 
\begin{align*}
\rank(A)\geq \sum_{i=1}^p\rank(B_i)&\geq \sum_{i=1}^p(n_i-1)\\
&=\sum_{i=1}^p n_i-\sum_{i=1}^p 1\\
&=(n-q)-p\\
&=n-(p+q)\\
&=n-\Delta^+(G).
\end{align*}
\vspace*{-12pt}
\end{proof}

From Theorem \ref{knownbounds} and \ref{delta}, we achieve the following upper and lower bounds for $M(G)$:
\begin{corollary}\label{Mbounds}
$\Delta(G)\leq M(G)\leq \Delta^+(G)$ for all graphs $G$.
\end{corollary}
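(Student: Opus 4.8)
The statement to prove is Corollary~\ref{Mbounds}: $\Delta(G)\leq M(G)\leq \Delta^+(G)$ for all graphs $G$. This is an immediate consequence of two results already established in the excerpt, so the plan is simply to assemble them.

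First I would recall Theorem~\ref{knownbounds}, which states that $\Delta(G)\leq M(G)$ for all graphs $G$ (in fact it gives $\Delta(G)\leq P(G)$ as well, but only the $M(G)$ half is needed here). This supplies the lower bound verbatim.

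Next I would invoke Theorem~\ref{delta}, just proved, which gives $M(G)\leq \Delta^+(G)$ for all graphs $G$. Chaining the two inequalities yields $\Delta(G)\leq M(G)\leq \Delta^+(G)$, which is exactly the claim. No case analysis, no construction, and no additional hypotheses are required, since both cited results hold for arbitrary graphs.

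There is essentially no obstacle: the only thing to be careful about is making sure the two cited inequalities are stated for \emph{all} graphs (they are — Theorem~\ref{knownbounds} and Theorem~\ref{delta} both quantify over all graphs $G$), so that the concatenation is valid without restriction. Concretely, the proof is one line: ``By Theorem~\ref{knownbounds}, $\Delta(G)\leq M(G)$, and by Theorem~\ref{delta}, $M(G)\leq \Delta^+(G)$; combining these gives the result.''
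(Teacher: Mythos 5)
Your proposal is correct and matches the paper exactly: the corollary is stated as an immediate consequence of Theorem~\ref{knownbounds} (giving $\Delta(G)\leq M(G)$) and Theorem~\ref{delta} (giving $M(G)\leq\Delta^+(G)$), which is precisely your one-line concatenation.
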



While Theorem \ref{forest} asserts that for any forest $G$, $\Delta(G) = M(G)$, it is easy to see that even for trees $\Delta^+$ and $M$ do not necessarily coincide. 

\begin{figure}[h]
\begin{center}
\begin{tikzpicture}
	\def \n {3}
	\def \radius {.7cm}

	\foreach \s in {1,...,\n} %
	{
  	\draw[-] (0:0) -- ({360/\n * (\s )}:{2*\radius});
  	\node[circle, fill, black, scale = .5] at ({360/\n * (\s - 1)}:\radius) {};
  	\node[black, fill, circle, scale = .5] at ({360/\n * (\s - 1)}:{2*\radius}) {};
 	}
	
	\node[circle, fill, black, scale = .5] at (0:0) {};
	\node[circle, fill, white, scale = .5] at ({360/\n * (1 - 1)}:\radius) {};
	\node[circle, draw, black, scale = .5] at ({360/\n * (1 - 1)}:\radius) {};

\end{tikzpicture}
\end{center}
\caption{A generalized star}\label{figgenstar}
\end{figure}
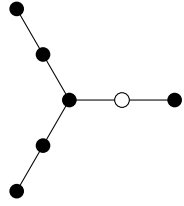
\begin{example}\label{genstar}
Consider the generalized star $G$ shown in Figure \ref{figgenstar}. Deleting the white vertex we have $\Delta^+(G) = 3$, but $M(G) = P(G) = 2$.
\end{example}

Examples \ref{unicyclic} and \ref{genstar} suggest that the answer to the following questions are not trivial.

\begin{question} For what graphs $G$, $\Delta(G) = M(G)$?\end{question}
The answer to this question shall contain all forests (by Theorem \ref{forest}) and the unicyclic graphs introduced in Example \ref{unicyclic}.
\begin{question} For what graphs $G$, $M(G) = \Delta^+(G)$?\end{question}
The answer does not include all forests as shown in Example \ref{genstar}, but includes stars, cycles, and wheels (Example \ref{starex}, \ref{cycle}, \ref{wheelex}).
\begin{question}For what graphs $G$, $\Delta(G) = M(G) = \Delta^+(G)$?\end{question}
The answer shall include disjoint unions of stars (see Example \ref{starex}) and paths.

\section{Graph Invariants $T^-(G)$ and $T^+(G)$}\label{sectionteeplusandteeminus}

Recall that the definitions of $\Delta$ and $\Delta^+$ for a graph $G$ involve induced paths obtained by deleting vertices from $G$. One of the reasons for considering induced paths is that an eigenvalue of a symmetric matrix whose graph is a path has maximum multiplicity one \cite{johnsonduarte99}. We investigate if replacement of paths by other graphs in the definitions of $\Delta$ and $\Delta^+$ improves the bounds for $M(G)$. So we define two new graph parameters $T^-$ and $T^+$ as follows:    

\begin{align*}
T^-(G) := \max \Biggl\{ P(G\setminus S)-|S| \ST   & S  \text{ is a subset of vertices of } G \\ 
 & \text{ such that }  G\setminus S \text{ is a forest } \Biggr\},\\
T^+(G) := \min \Biggl\{ P(G\setminus S)+|S| \ST   & S  \text{ is a subset of vertices of } G \\ 
 & \text{ such that }  G\setminus S \text{ is a forest } \Biggr\}.
\end{align*}
Assume $S$ is a set of $q$ vertices, and $G\setminus S$ is a forest which is a vertex-disjoint union of $p$ trees $T_1, T_2,\ldots, T_p$. Then $P(G\setminus S) = \sum_{i=1}^p P(T_i)$ by Observation \ref{components}. Hence $T^-(G)$ and $T^+(G)$ can be rewritten as the following:
\begin{small}\begin{align*}
T^-(G) = \max \Biggl\{  \left( \sum_{i=1}^{p} P(T_i) \right) - q \ST   & \text{there exist } q \text{ vertices of } G \text{ whose deletion } \\ 
 &  \text{leaves } p \text{ vertex-disjoint trees } T_1,\ldots,T_p \Biggr\},\\
T^+(G) = \min \Biggl\{ \left( \sum_{i=1}^{p} P(T_i) \right) + q \ST  & \text{there exist } q \text{ vertices of } G \text{ whose deletion } \\ & \text{leaves } p \text{ vertex-disjoint trees } T_1,\ldots,T_p \Biggr\}.
\end{align*}\end{small}
For a forest $G$, the optimal set of vertices to be deleted is the empty set (i.e., $q=0$)
 and consequently  $T^+(G)=T^-(G)=P(G)$.
 
  The following examples compute $T^-$ and $T^+$ for some other  families of graphs.

\begin{example}\label{tpluscycle}
$T^-(C_n)=0$ and $T^+(C_n)=2$.

Note that to get trees induced in $C_n$, we need to delete at least one vertex. If deletion of $q\geq 1$ vertices from $C_n$ gives $p$ trees (paths) $P_{n_1},\ldots,P_{n_p}$, then $1\leq p\leq q$. Thus $q+\sum_{i=1}^p P(P_{n_i})=q+p\geq 2$ and  $-q+\sum_{i=1}^p P(P_{n_i})=-q+p\leq 0$, where equalities hold if and only if the number of optimal vertices deleted is $1$. Thus $T^+(C_n)=2$ and $T^-(C_n)=0$.
\end{example}

\begin{example}\label{wheel}
Let $W_n$ be the wheel graph on $n$ vertices. Then
\begin{enumerate}
\item[(a)] $M(W_n)=3.$ 
\item[(b)] $T^-(W_n)=-1,T^+(W_n)=3.$
\end{enumerate}

The set of optimal vertices to be deleted for $T^-$, $T^+$, and $\Delta^+$ are shown as white vertices in Figure \ref{figwheel}. 
\end{example}

\begin{example}
Let $H_n$ be the $n$-sun. Then
\begin{enumerate}
\item[(a)] $M(H_n)=\left\{\begin{array}{cl}
2&\text{ if } n=3\\
\displaystyle\floor{\frac{n}{2}}&\text{ if } n\geq 4
\end{array}\right.$ 
\item[(b)] $T^-(H_n)=T^+(H_n)-2=\left\{\begin{array}{cl}
1&\text{ if } n=3\\
\displaystyle\floor{\frac{n}{2}}&\text{ if } n\geq 4
\end{array}\right.$
\end{enumerate}

An optimal set of vertices to be deleted for $T^-$ and $T^+$ is any single vertex from the cycle. 
\end{example}

\begin{obs}
The optimal sets for $T^-(G)$ and $T^+(G)$ can be chosen to be the same, when $G$ is a wheel graph or the $n$-sun.
\end{obs}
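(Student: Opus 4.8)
The plan is to handle the two families separately and, for each, to write down one vertex set $S$ with $G\setminus S$ a forest that is simultaneously optimal in the maximization defining $T^-(G)$ and in the minimization defining $T^+(G)$. Because the exact values of $T^-$ and $T^+$ for wheels and $n$-suns have already been established (Example~\ref{wheel} and the example immediately preceding this observation), it is enough to exhibit a single $S$ realizing both of those two numbers.

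For the wheel $W_n$, let $h$ be the hub and let $r$ be any rim vertex, and set $S=\{h,r\}$. Removing $r$ from the rim cycle $C_{n-1}$ leaves the path $P_{n-2}$, so $G\setminus S=P_{n-2}$ and $P(G\setminus S)=1$. Then $P(G\setminus S)-|S|=1-2=-1=T^-(W_n)$ and $P(G\setminus S)+|S|=1+2=3=T^+(W_n)$, so $S$ is optimal for both; this is precisely the set of white vertices in Figure~\ref{figwheel}.

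For the $n$-sun $H_n$, label the rim cycle $v_1v_2\cdots v_n$ and let $u_i$ be the pendant attached to $v_i$; take $S=\{v_1\}$. Then $G\setminus S$ is the disjoint union of the isolated vertex $u_1$ with the caterpillar $K$ whose spine is the path $v_2-v_3-\cdots-v_n$ carrying one pendant $u_i$ at each spine vertex, and this is a forest. The only computation needed is $P(K)$. I would observe that an induced path in $K$ uses a contiguous block $v_i,\dots,v_j$ of spine vertices plus at most the two end-pendants $u_i$ and $u_j$ (a pendant attached to an interior vertex of the block would give that vertex three neighbors on the path), so any path cover of $K$ splits the $(n-1)$-vertex spine into $t$ contiguous blocks of lengths $\ell_1,\dots,\ell_t$ and uses $t+\sum_{s=1}^{t}\max(\ell_s-2,0)$ paths, the interior pendants of each long block being forced to be singleton paths. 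Minimizing this quantity over all such partitions — blocks of length $2$ being optimal — yields $P(K)=\lceil(n-1)/2\rceil=\floor{\frac{n}{2}}$, which for $n=3$ degenerates to $P(P_4)=1$. Hence $P(G\setminus S)=\floor{\frac{n}{2}}+1$, so $P(G\setminus S)-|S|=\floor{\frac{n}{2}}$ and $P(G\setminus S)+|S|=\floor{\frac{n}{2}}+2$, which match $T^-(H_n)$ and $T^+(H_n)$ respectively, so the single vertex $v_1$ is optimal for both.

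The wheel case is routine bookkeeping. The one step requiring care is the lower bound $P(K)\ge\lceil(n-1)/2\rceil$: I would compare the count $t+\sum_s\max(\ell_s-2,0)$ against $\tfrac12\sum_s\ell_s=\tfrac{n-1}{2}$ and check that the difference is a sum of nonnegative terms (a contribution of $\tfrac12$ from each length-one block and of $\tfrac{\ell_s-2}{2}$ from each block of length $\ell_s\ge3$), so that the integer $P(K)$ is at least $\lceil(n-1)/2\rceil$; the explicit cover by blocks of length $2$ supplies the matching upper bound. This path-cover count for $K$ is the only real content; the rest is checking the arithmetic against the already-recorded values of $T^-$ and $T^+$.
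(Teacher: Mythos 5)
Your proposal is correct and follows the same route the paper takes implicitly: it exhibits the very sets the paper points to (the hub plus one rim vertex for $W_n$, i.e.\ the white vertices of Figure~\ref{figwheel}, and a single cycle vertex for $H_n$) and checks they simultaneously attain the values of $T^-$ and $T^+$ recorded in the preceding examples. The only added content is your explicit verification that the caterpillar left after deleting a cycle vertex of $H_n$ has path cover number $\lfloor n/2\rfloor$, which the paper leaves implicit and which you carry out correctly.
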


By definitions we have the following results.
\begin{prop}\label{prop1} Let $G$ be a graph. Then
\begin{enumerate}
\item[(a)] \label{propparta} $\Delta(G)\leq T^-(G)$ and
\item[(b)] \label{proppartb} $P(G)\leq T^+(G)\leq \Delta^+(G)$.
\end{enumerate}
\end{prop}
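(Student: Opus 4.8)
The plan is to derive each inequality directly from the definitions, exploiting the observation that a path is a special kind of tree and a forest with no deleted vertices is a special kind of forest-after-deletion. For part (a), let $S$ be a set of $q$ vertices whose deletion from $G$ leaves $p$ vertex-disjoint paths $P_{n_1},\dots,P_{n_p}$, witnessing the value $\Delta(G) = p - q$. Since each path $P_{n_i}$ is in particular a tree with $P(P_{n_i}) = 1$, the same set $S$ shows that $G \setminus S$ is a forest, and $T^-(G) \geq \left(\sum_{i=1}^p P(P_{n_i})\right) - q = p - q = \Delta(G)$. (One must check the degenerate case $\Delta(G)$ attained by $S = \emptyset$, which forces $G$ itself to be a forest; then $G \setminus \emptyset = G$ is a forest and $T^-(G) \geq P(G) = \Delta(G)$ by Theorem \ref{forest}, so the bound still holds.)

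For the left inequality of part (b), take $S = \emptyset$ if $G$ is a forest, giving $T^+(G) = P(G)$ directly. If $G$ is not a forest, let $S$ be an optimal set for $T^+(G)$ with $|S| = q \geq 1$, so $G \setminus S$ is a forest that is a disjoint union of trees $T_1,\dots,T_p$ and $T^+(G) = \sum_{i=1}^p P(T_i) + q$. The idea is to reconstruct a path cover of $G$ from a path cover of $G \setminus S$ by using each of the $q$ vertices of $S$ as a singleton path: a minimum path cover of $G \setminus S$ uses $\sum_{i=1}^p P(T_i)$ induced paths, and adjoining the $q$ trivial paths on the vertices of $S$ yields a cover of all of $G$ by $\sum_{i=1}^p P(T_i) + q$ vertex-disjoint induced paths (single vertices are always induced paths, and disjointness is automatic). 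Hence $P(G) \leq \sum_{i=1}^p P(T_i) + q = T^+(G)$.

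For the right inequality of part (b), let $S$ be an optimal set for $\Delta^+(G)$: deleting its $q$ vertices leaves $p$ vertex-disjoint paths $P_{n_1},\dots,P_{n_p}$ and $\Delta^+(G) = p + q$. Again each $P_{n_i}$ is a tree with $P(P_{n_i}) = 1$, so this same $S$ is admissible in the definition of $T^+(G)$ (namely $G \setminus S$ is a forest), and therefore $T^+(G) \leq \sum_{i=1}^p P(P_{n_i}) + q = p + q = \Delta^+(G)$.

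None of the three inequalities presents a genuine obstacle; the only point requiring a moment's care is the bookkeeping in the left inequality of (b)—verifying that the union of a minimum path cover of $G \setminus S$ with the singleton paths on $S$ is a legitimate family of \emph{induced} vertex-disjoint paths covering $V(G)$—and the handling of the degenerate optimal choices $S = \emptyset$ (for forests) so that the stated bounds are not vacuous. Everything else is immediate from the fact that $P(P_m) = 1$ together with Observation \ref{components}.
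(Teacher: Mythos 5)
Your proof is correct and takes essentially the same route as the paper: for (a) and for $T^+(G)\leq \Delta^+(G)$ you reuse the optimal vertex set for $\Delta$ (resp.\ $\Delta^+$) as an admissible set in the definition of $T^-$ (resp.\ $T^+$), noting that paths are trees with path cover number one, and for $P(G)\leq T^+(G)$ you augment minimum path covers of the trees of $G\setminus S$ with the vertices of $S$ as singleton paths, exactly as in the paper. Your extra remarks on the $S=\emptyset$ case and on inducedness are harmless bookkeeping beyond what the paper writes out.
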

\begin{proof}
(a) Suppose that deletion of $q$ vertices from $G$ leaves $p$ vertex-disjoint paths $P_{n_1},\ldots,P_{n_p}$ such that $\Delta(G)=p-q$. Since $P(P_{n_i})=1$, $$\Delta(G)=-q+\sum_{i=1}^{p}P(P_{n_i})\leq T^-(G).$$
(b) Suppose that deletion of $q$ vertices $v_1,\ldots,v_q$ from $G$ leaves $p$ vertex-disjoint trees $T_1,\ldots,T_p$ such that $T^+(G)=q+\sum_{i=1}^{p}P(T_i)$. Note that union of $v_1,\ldots,v_q$, and optimal path covers of $T_1,\ldots,T_p$ forms a path cover of $G$ of length $T^+(G)=q+\sum_{i=1}^{p}P(T_i)$. Thus $P(G)\leq T^+(G)$.
\\

Let $Q$ be an optimal set of $q$ vertices such that deleting them from $G$ leaves $p$ disjoint paths $P_1, P_2, \ldots, P_p$; and $\Delta^+(G) = p+q$. Since $P(P_i) = 1$,
\begin{align*}
\Delta^+(G) &= p+q\\
&= q + \sum_{i=1}^{p} 1\\
&= q + \sum_{i=1}^{p} P(P_i)\\
&\geq T^+(G).
\end{align*}
\vspace*{-12pt}
\end{proof}

First we note that the equality in Proposition \ref{prop1}(a) holds for forests. In fact we can show the equality for all graphs.

\begin{theorem}\label{prop2}
$\Delta(G)=T^-(G)$ for all graphs $G$.
\end{theorem}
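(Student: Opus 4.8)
The plan is to prove the two inequalities $\Delta(G) \leq T^-(G)$ and $T^-(G) \leq \Delta(G)$ separately. The first direction is already established as Proposition \ref{prop1}(a), so the entire content of the proof is the reverse inequality $T^-(G) \leq \Delta(G)$. The idea is to take an optimal configuration for $T^-$ and convert it into a configuration of deleted vertices leaving disjoint paths, without decreasing the value $p - q$.

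Concretely, I would start with a set $S$ of $q$ vertices whose deletion leaves a forest that is a disjoint union of trees $T_1, \dots, T_p$ (here I am using the reformulation of $T^-$ in terms of trees rather than the original set-deletion form), chosen so that $T^-(G) = \sum_{i=1}^{p} P(T_i) - q$. The key observation is Theorem \ref{tree} (really Theorem \ref{forest}): for each tree $T_i$, $P(T_i) = \Delta(T_i)$, so there exist $q_i$ vertices of $T_i$ whose deletion leaves $p_i$ vertex-disjoint paths with $p_i - q_i = P(T_i)$. Now delete from $G$ the set $S$ together with all of these $q_i$ vertices, for a total of $q + \sum_i q_i$ deleted vertices; what remains is exactly $\sum_i p_i$ vertex-disjoint paths (they are induced in $G$ because they are induced in the $T_i$, which are themselves induced subgraphs of $G$, and the $T_i$ are pairwise non-adjacent in $G \setminus S$). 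Therefore
\begin{align*}
\Delta(G) &\geq \sum_{i=1}^{p} p_i - \left( q + \sum_{i=1}^{p} q_i \right) = \sum_{i=1}^{p}(p_i - q_i) - q = \sum_{i=1}^{p} P(T_i) - q = T^-(G).
\end{align*}
Combined with Proposition \ref{prop1}(a) this gives equality.

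The main thing to be careful about — and what I expect is the only genuine subtlety — is verifying that the paths produced inside each $T_i$ really are induced subgraphs of $G$ and are pairwise vertex-disjoint with no edges between them in $G$. Vertex-disjointness is immediate since the $T_i$ are vertex-disjoint. The "no edges between components" point is where we use that $T_1, \dots, T_p$ are the connected components of the forest $G \setminus S$: any edge of $G$ between two different $T_i$'s would have been an edge of $G \setminus S$, contradicting that they lie in different components. That each path is induced in $G$ follows because it is an induced subgraph of $T_i$, and $T_i$ (being a union of components of $G \setminus S$) is an induced subgraph of $G \setminus S$, hence of $G$. So the configuration is legitimate for the definition of $\Delta(G)$, and the chain of inequalities above goes through. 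One should also note the degenerate case where $G$ itself is a forest, in which case $S = \emptyset$ is optimal and $T^-(G) = P(G) = \Delta(G)$ directly by Theorem \ref{forest}, though the general argument above already covers it.
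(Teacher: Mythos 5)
Your proposal is correct and follows essentially the same route as the paper: quote Proposition \ref{prop1}(a) for $\Delta(G)\leq T^-(G)$, then take an optimal deletion set for $T^-(G)$ leaving trees $T_1,\ldots,T_p$, apply $\Delta(T_i)=P(T_i)$ to each tree, and combine the deleted vertices to get a configuration witnessing $\Delta(G)\geq T^-(G)$. Your extra care in checking that the resulting paths are induced and pairwise non-adjacent in $G$ is a reasonable elaboration of a point the paper leaves implicit, but it is not a different argument.
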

\begin{proof}
Let $G$ be a graph. By Proposition \ref{prop1} we have $\Delta(G)\leq T^-(G)$. So it suffices to show that $\Delta(G)\geq T^-(G)$. Note that $\Delta(T) = P(T)$, for any tree $T$.

Now choose an optimal set of $q$ vertices for $T^-(G)$ such that deleting them leaves $p$ vertex-disjoint trees $T_1, T_2, \ldots, T_p$.  For each $i=1,2,\ldots, p$, choose an optimal set of $k_i$ vertices for $\Delta(T_i)$ such that deleting them from $T_i$ leaves $\ell_i$ vertex-disjoint paths. Altogether we have chosen a set of $\left( \sum_{i=1}^{p} k_i \right) + q$ vertices such that deleting them from $G$ leaves $\sum_{i=1}^{p} \ell_i$  vertex-disjoint paths. That is, 

\begin{align*}
\Delta(G) & \geq \left( \displaystyle\sum_{i=1}^{p} \ell_i \right) - \left( \left( \displaystyle\sum_{i=1}^{p} k_i \right) + q \right)\\
		& = \left( \displaystyle\sum_{i=1}^{p} \left( \ell_i - k_i \right) \right) - q\\
		& = \left( \displaystyle\sum_{i=1}^{p} \Delta(T_i) \right) - q\\
		& = \left( \displaystyle\sum_{i=1}^{p} P(T_i) \right) - q\\
		& = T^-(G).
\end{align*}
The last equality above holds since the $q$ vertices were chosen to be an optimal set of vertices for $T^-(G)$.
\end{proof}

Note that since $\Delta(G)=T^-(G)$ for all graphs $G$, trees can be replaced by paths in the definition of $T^-(G)$. But this is not the case for $T^+(G)$. For example, for the graph $G$ in Figure \ref{eichgraph}, we have  $T^+(G) = P(G) =2$ and $\Delta^+(G) = 4$.

It is interesting to note that $T^+(G)$ is not only just upper bound for $P(G)$, but also for $M(G)$.

\begin{figure}[h]
\begin{center}
\begin{tikzpicture}
	\node[draw, circle, fill, black, scale = .5] (1) at (-1,0) {};
	\node[draw, circle, fill, black, scale = .5] (2) at (0,0) {};
	\node[draw, circle, fill, black, scale = .5] (3) at (1,0) {};
	\node[draw, circle, fill, black, scale = .5] (4) at (-1,-1) {};
	\node[draw, circle, fill, black, scale = .5] (5) at (0,-1) {};
	\node[draw, circle, fill, black, scale = .5] (6) at (1,-1) {};
	\path[draw] (1) -- (2) -- (3);
	\path[draw] (2) -- (5);
	\path[draw] (4) -- (5) -- (6);
\end{tikzpicture}
\end{center}
\caption{Graph $G$ with $T^+(G)=P(G)=2$ and $\Delta^+(G)=4$}\label{eichgraph}
\end{figure}
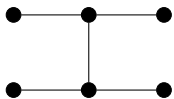

\begin{theorem}\label{Mislessthantplus}
For any graph $G$, \[M(G) \leq T^+(G).\]
\end{theorem}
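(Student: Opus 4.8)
The plan is to mimic the proof of Theorem~\ref{delta}, but using the sharper fact that $M(T) = P(T)$ for trees (Theorem~\ref{tree}) in place of the bound $M(P_n)=1$ for paths. Concretely, I want to show $\mr(G) = n - M(G) \geq n - T^+(G)$, which is equivalent to the claim. So fix an arbitrary $A \in S(G)$; it suffices to prove $\rank(A) \geq n - T^+(G)$.

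First I would choose an optimal set $S$ of $q$ vertices witnessing $T^+(G)$, so that deleting $S$ from $G$ leaves a forest which is a vertex-disjoint union of trees $T_1, \ldots, T_p$ with $T^+(G) = q + \sum_{i=1}^{p} P(T_i)$, and $n - q = \sum_{i=1}^{p} |T_i|$. For each $i$, let $B_i$ be the principal submatrix of $A$ whose rows and columns are indexed by the vertices of $T_i$; then $G(B_i) = T_i$, so $B_i \in S(T_i)$. The matrix $B_1 \oplus \cdots \oplus B_p$ is an $(n-q)\times(n-q)$ principal submatrix of $A$, hence $\rank(A) \geq \sum_{i=1}^{p} \rank(B_i)$. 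By Lemma~\ref{mult} applied to each tree $T_i$, $\rank(B_i) \geq \mr(T_i) = |T_i| - M(T_i)$, and by Theorem~\ref{tree}, $M(T_i) = P(T_i)$. Therefore
\begin{align*}
\rank(A) \geq \sum_{i=1}^{p} \rank(B_i) &\geq \sum_{i=1}^{p}\bigl(|T_i| - P(T_i)\bigr)\\
&= (n-q) - \sum_{i=1}^{p} P(T_i)\\
&= n - \Bigl(q + \sum_{i=1}^{p} P(T_i)\Bigr)\\
&= n - T^+(G).
\end{align*}
Since this holds for every $A \in S(G)$, we get $\mr(G) \geq n - T^+(G)$, i.e. $M(G) = n - \mr(G) \leq T^+(G)$.

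There is essentially no serious obstacle here: the argument is the same interlacing-via-principal-submatrices idea as Theorem~\ref{delta}, and the only genuinely substantial input, $\mr(T) = |T| - P(T)$ for trees, is supplied by Theorem~\ref{tree}. The one point that deserves a line of care is the verification that $B_i \in S(T_i)$ — that is, that the zero-nonzero pattern of the principal submatrix is exactly the induced subgraph $T_i$ (which is immediate from the definition of $G(A)$ and the fact that $T_i$ is an induced subgraph of $G$) — and the observation that taking a principal submatrix cannot increase the rank beyond the sum of the ranks of the diagonal blocks. Everything else is bookkeeping with the counting identity $n - q = \sum_i |T_i|$.
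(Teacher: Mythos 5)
Your proof is correct and is exactly what the paper intends: its proof of this theorem is simply ``similar to that of Theorem~\ref{delta}'', and you have carried out that same principal-submatrix argument, replacing the bound $M(P_{n_i})=1$ for paths with $M(T_i)=P(T_i)$ from Theorem~\ref{tree} for the trees $T_i$ left after deleting an optimal set for $T^+(G)$.
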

\begin{proof}
The proof is similar to that of Theorem \ref{delta}.
\end{proof}

Now we discuss the connection of $T^+(G)$ with the zero forcing number $Z(G)$ which is an well-known upper bound for $M(G)$. A zero forcing set $Z$ is a subset of vertices of $G$ such that if the vertices in $Z$ are initially colored and the other vertices are not colored, then all the vertices of $G$ become colored after we apply the following coloring rule: if $u$ is a colored vertex with exactly one uncolored neighbor $v$, then color $v$.  The zero forcing number $Z(G)$ is the minimum size of a zero forcing set of $G$. For example, in the graph $G$ in Figure \ref{zfgraph}, two pendant vertices at distance 3 form a zero forcing set  because if they  are initially colored, then they will force the remaining vertices to be colored. Thus $Z(G)\leq 2$. Since there is no zero forcing set of size $1$, we have $Z(G)=2$.

Let $S$ be an optimal set for $T^+(G)$. Then $G\setminus S$ is a forest for which $P(G\setminus S)=M(G\setminus S)=Z(G\setminus S)$. Now we can find a zero forcing set $Z'$ of $G\setminus S$ of size $P(G\setminus S)$ by choosing an endpoint of each path in a minimum path cover of $G\setminus S$. It can be verified that $Z'\cup S$ is a zero forcing set of $G$ and consequently $$Z(G)\leq |Z'\cup S|=|Z'|+|S|=P(G\setminus S)+|S|=T^+(G).$$ 
Although $T^+(G)$ does not improve the upper bound $Z(G)$ of $M(G)$, it has a different approach than the zero forcing number $Z(G)$.

\begin{obs}\label{zf}
For any graph $G$, $Z(G) \leq T^+(G)$.
\end{obs}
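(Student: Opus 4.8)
The plan is to build an explicit zero forcing set of $G$ out of an optimal witness for $T^+(G)$. First I would fix a subset $S$ of vertices attaining the minimum in the definition of $T^+(G)$, so that $G\setminus S$ is a forest and $T^+(G)=P(G\setminus S)+|S|$. The crucial leverage is that on a forest the relevant parameters collapse: Theorem \ref{forest} gives $P(G\setminus S)=M(G\setminus S)$, and the result of the AIM Minimum Rank Work Group \cite{aim08} cited in the introduction gives $M(G\setminus S)=Z(G\setminus S)$ for forests, so $Z(G\setminus S)=P(G\setminus S)$. Thus the forest $G\setminus S$ admits a zero forcing set of size exactly $P(G\setminus S)$, and I would produce one concretely rather than merely invoking its existence.

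Next I would construct the candidate forcing set. Take a minimum path cover of $G\setminus S$, consisting of $P(G\setminus S)$ vertex-disjoint induced paths, and let $Z'$ consist of one endpoint chosen from each such path. Since a single endpoint of a path forces the remaining vertices one at a time along its length, $Z'$ is a zero forcing set of the forest $G\setminus S$ with $|Z'|=P(G\setminus S)$. I then claim that $Z'\cup S$ is a zero forcing set of the full graph $G$. Granting the claim, coloring $Z'$ and $S$ initially yields $Z(G)\le|Z'\cup S|=|Z'|+|S|=P(G\setminus S)+|S|=T^+(G)$, which is exactly the desired inequality.

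The heart of the argument, and the step I expect to require the most care, is verifying that the forcing process which succeeds on $G\setminus S$ still succeeds inside the larger graph $G$ once $S$ is colored at the outset. The key point is the monotonicity of the color-change rule: coloring extra vertices can never invalidate a legal force. Concretely, consider any vertex $u\in V(G\setminus S)$ at any stage of the replayed process in $G$; because every neighbor of $u$ lying in $S$ is already colored, the uncolored neighbors of $u$ in $G$ coincide with its uncolored neighbors in $G\setminus S$. Hence every force carried out in the forest $G\setminus S$ remains a legal force in $G$, and replaying the same sequence colors all of $V(G\setminus S)$. Together with the initially colored set $S$, this colors all of $V(G)$, confirming that $Z'\cup S$ is a zero forcing set of $G$ and completing the proof.
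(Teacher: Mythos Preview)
Your proposal is correct and follows essentially the same route as the paper: choose an optimal $S$ for $T^+(G)$, take one endpoint from each path in a minimum path cover of the forest $G\setminus S$ to form $Z'$, and argue that $Z'\cup S$ is a zero forcing set of $G$. You in fact supply more detail than the paper does on the verification step, spelling out the monotonicity argument that the paper leaves as ``it can be verified.''
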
 
Note that the equality does not hold for the graph in Figure \ref{zfgraph}, where $Z(G)=P(G)=2$ and $T^+(G)=4$.
\begin{figure}[h]
\begin{center}
\begin{tikzpicture}
	\node[draw, circle, fill, black, scale = .5] (1) at (-1,0) {};
	\node[draw, circle, fill, black, scale = .5] (2) at (0,0) {};
	\node[draw, circle, fill, black, scale = .5] (3) at (1,0) {};
	\node[draw, circle, fill, black, scale = .5] (7) at (2,0) {};
	\node[draw, circle, fill, black, scale = .5] (4) at (-1,-1) {};
	\node[draw, circle, fill, black, scale = .5] (5) at (0,-1) {};
	\node[draw, circle, fill, black, scale = .5] (6) at (1,-1) {};
	\node[draw, circle, fill, black, scale = .5] (8) at (2,-1) {};
	\path[draw] (1) -- (2) -- (3)--(7);
	\path[draw] (2) -- (5);
	\path[draw] (3) -- (6);
	\path[draw] (4) -- (5) -- (6)--(8);
\end{tikzpicture}
\end{center}
\caption{Graph $G$ with $Z(G) < T^+(G)$.}\label{zfgraph}
\end{figure}
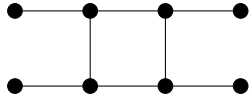

Now we summarize earlier results involving different parameters in the following corollary. 

\begin{corollary}\label{allinequalities}
For all graphs $G$,
\[\Delta(G) = T^-(G) \leq M(G) \leq Z(G) \leq T^+(G) \leq \Delta^+(G),\] where $T^-(G) = M(G) = T^+(G)$ if $G$ is a forest. 
\end{corollary}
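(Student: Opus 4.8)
The plan is to assemble Corollary \ref{allinequalities} purely from results already established in the excerpt, so essentially no new argument is needed beyond bookkeeping. The chain $\Delta(G) = T^-(G) \leq M(G) \leq Z(G) \leq T^+(G) \leq \Delta^+(G)$ breaks into five links. First, $\Delta(G) = T^-(G)$ is exactly Theorem \ref{prop2}. Second, $T^-(G) \leq M(G)$ follows by combining $\Delta(G) \leq M(G)$ from Theorem \ref{knownbounds} with the equality just cited. Third, $M(G) \leq Z(G)$ is the inequality of the AIM Minimum Rank Work Group \cite{aim08} quoted in the introduction. Fourth, $Z(G) \leq T^+(G)$ is Observation \ref{zf}. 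Fifth, $T^+(G) \leq \Delta^+(G)$ is Proposition \ref{prop1}(b). Stringing these together gives the displayed inequality.

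For the ``forest'' clause, I would recall the remark made immediately after the definitions of $T^-$ and $T^+$: if $G$ is a forest then the optimal deletion set is empty, so $T^-(G) = T^+(G) = P(G)$, and by Theorem \ref{forest} (equivalently Theorem \ref{tree} applied componentwise via Observation \ref{components}) $P(G) = M(G)$. Hence all five quantities in the chain collapse to $P(G) = M(G)$ when $G$ is a forest. (One could also note $M(G) = Z(G)$ for forests is itself in \cite{aim08}, but it is not needed since the outer equalities already force everything between them to be equal.)

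The main ``obstacle'' is really just that there is nothing substantive to prove here: the corollary is a summary, and I would present it as a one-line proof citing Theorems \ref{knownbounds}, \ref{prop2}, Proposition \ref{prop1}, Observation \ref{zf}, the bound $M(G)\le Z(G)$ from \cite{aim08}, and Theorem \ref{forest}. The only mild care needed is making sure the $Z(G)$ term is inserted in the correct place — it sits between $M(G)$ and $T^+(G)$ precisely because $M(G)\le Z(G)$ and $Z(G)\le T^+(G)$ were both proved — and that the equalities at the two ends of the forest case are justified independently (via empty deletion set for $T^\pm$ and via Theorem \ref{forest} for $P = M$), rather than trying to derive them from the inequality chain alone.

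\begin{proof}
Combine Theorem \ref{prop2} ($\Delta(G) = T^-(G)$), Theorem \ref{knownbounds} ($\Delta(G) \leq M(G)$), the bound $M(G) \leq Z(G)$ of \cite{aim08}, Observation \ref{zf} ($Z(G) \leq T^+(G)$), and Proposition \ref{prop1}(b) ($T^+(G) \leq \Delta^+(G)$). If $G$ is a forest, then the empty set is the optimal deletion set for both $T^-(G)$ and $T^+(G)$, so $T^-(G) = T^+(G) = P(G)$, and $P(G) = M(G)$ by Theorem \ref{forest}; hence every term in the chain equals $M(G)$.
\end{proof}
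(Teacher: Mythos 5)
Your proof is correct and follows essentially the same route as the paper: chain Theorem \ref{prop2}, Theorem \ref{knownbounds}, $M(G)\leq Z(G)$, Observation \ref{zf}, and Proposition \ref{prop1}(b), then handle the forest case via the empty optimal deletion set and $P(G)=M(G)$ for forests. Your explicit citation of \cite{aim08} for $M(G)\leq Z(G)$ is, if anything, slightly more precise than the paper's attribution of that step.
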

\begin{proof}
By Theorem \ref{prop2}, we have $\Delta(G) = T^-(G)$. By Theorem \ref{knownbounds}, we have $\Delta(G) \leq M(G)$, hence $T^-(G) \leq M(G)$. By Theorem \ref{Mislessthantplus} and Observation \ref{zf}, $M(G) \leq Z(G)\leq  T^+(G)$. Finally, by Proposition \ref{prop1}(b), $T^+(G) \leq \Delta^+(G)$.

Note that if $G$ is a forest, then an optimal set of vertices to be deleted for $T^-$ and for $T^+$ can be chosen to be the empty set. Hence $q = 0$ and $\sum_{i=1}^{p} P(T_i) = P(G)$. Hence, $T^-(G) = T^+(G) = P(G) = M(G)$.
\end{proof}

Note that since $T^-(H_5)=M(H_5)$ and $M(W_5)=T^+(W_5)$, $T^-(G)$ and $T^+(G)$ give a tight lower bound and a tight upper bound for $M(G)$ respectively.

\section{On computing $T^-$ and $T^+$}\label{sectioncomputing}
In this section we provide some tools to reduce the time of computation for $T^-$ and $T^+$ for graphs. Let $G$ be a graph on $n$ vertices with $m$ edges. We show that there is always an optimal set of vertices to be deleted for $T^-$ and $T^+$ that has size at most $m-n+1$.

An {\it Eulerian subgraph} of $G$ is a subgraph of $G$ whose vertices have even degree. It is well-known that an Eulerian subgraph $H$ is a union of cycles in $H$. The (binary) {\it cycle space} of $G$ is the set of Eulerian subgraphs in  $G$. The cycle space of $G$ can be described as a vector space over $\mathbb{Z}_2$. A basis for this vector space is called a cycle basis of the graph. It can be shown that the dimension of the cycle space of a connected graph is $m-n+1$ \cite[Section 1.9 pp. 23--28]{diestel12}. Therefore any cycle in $G$ is a linear combination of cycles in a cycle basis and each of $m-n+1$ cycles in a cycle basis is not a linear combination of smaller cycles.

\begin{lemma}\label{cyclebasismanyverticesisenough}
Let $G$ be a connected graph on $n$ vertices with $m$ edges.  Let $S$ be a set of vertices of $G$ such that $G\setminus S$ does not have any cycles. Then there is a set $S' \subseteq S$ with $|S'| \leq m-n+1$ such that $G \setminus S'$ does not have any cycles.
\end{lemma}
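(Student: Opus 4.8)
The plan is to construct $S'$ by a greedy peeling procedure and to bound its size using the cyclomatic number. For a graph $H$ write $\mu(H) := |E(H)| - |V(H)| + c(H)$, where $c(H)$ is the number of connected components of $H$; this is the dimension of the cycle space of $H$, so $\mu(H)=0$ exactly when $H$ is a forest, and $\mu(G)=m-n+1$ by the discussion preceding the lemma (as $G$ is connected).

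First I would define a sequence $G=H_0\supseteq H_1\supseteq\cdots$ of induced subgraphs together with vertices $v_1,v_2,\ldots\in S$ as follows. Given $H_i$ (obtained by deleting $v_1,\ldots,v_i$ from $G$): if $H_i$ is a forest, stop and set $S'=\{v_1,\ldots,v_i\}$; otherwise $H_i$ contains a cycle $C$, and I claim $C$ must pass through a vertex of $S\setminus\{v_1,\ldots,v_i\}$. Indeed, the only vertices of $S$ that could lie on $C\subseteq H_i$ are those not yet deleted, so if $C$ missed $S\setminus\{v_1,\ldots,v_i\}$ it would be a cycle of $G\setminus S$, contradicting the hypothesis. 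Pick such a vertex, call it $v_{i+1}$, and set $H_{i+1}=H_i\setminus\{v_{i+1}\}$. When the procedure stops we have $S'\subseteq S$ and $G\setminus S'=H_i$ a forest, so only the size bound remains.

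The heart of the argument is the estimate $\mu(H_{i+1})\le\mu(H_i)-1$. Set $v=v_{i+1}$ and $d=\deg_{H_i}(v)$; since $v$ lies on a cycle of $H_i$ we have $d\ge 2$. Deleting $v$ removes one vertex and $d$ edges, and splits the component of $H_i$ containing $v$ into $t$ pieces for some $t$ with $1\le t\le d-1$: every piece contains a neighbour of $v$, while the two neighbours of $v$ on a cycle through $v$ are joined by the remainder of that cycle and hence lie in a common piece, so the $d$ neighbours of $v$ meet at most $d-1$ pieces. Substituting into the definition of $\mu$ gives $\mu(H_{i+1})=\mu(H_i)-d+t\le\mu(H_i)-1$. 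Hence the non-negative integer $\mu(H_i)$ drops strictly at every step, starting from $\mu(H_0)=m-n+1$, so the procedure terminates after at most $m-n+1$ steps; consequently $|S'|\le m-n+1$.

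I do not anticipate a real obstacle. The one place requiring care is the inequality $t\le d-1$, i.e., that deleting a vertex lying on a cycle strictly lowers the cyclomatic number — this is precisely where we use that $v_{i+1}$ was chosen on a cycle of $H_i$, not merely in $S$. One could alternatively phrase the whole argument in the language of cycle bases, selecting one vertex of $S$ from each element of a suitable basis, but the peeling argument keeps the bookkeeping transparent and makes the bound $m-n+1$ fall out directly.
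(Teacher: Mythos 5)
Your proof is correct, and it takes a genuinely different route from the paper's. The paper fixes a cycle basis $B=\{C_1,\dots,C_{m-n+1}\}$ of $G$ and builds $S'$ by associating deleted vertices of $S$ with basis cycles (first the vertices lying on exactly one basis cycle, then at most one further vertex of $S$ per remaining basis cycle), so the bound $|S'|\leq |B|=m-n+1$ comes from the size of the basis; several steps there are left to the reader. You instead run a greedy peeling: as long as the current graph has a cycle, that cycle must meet $S$ minus the vertices already removed (else it would survive in $G\setminus S$), and deleting such a vertex $v$ of degree $d\geq 2$ strictly decreases the cyclomatic number $\mu(H)=|E(H)|-|V(H)|+c(H)$, because the component of $v$ splits into $t\leq d-1$ pieces (each piece contains a neighbour of $v$, and the two cycle-neighbours of $v$ stay in one piece), giving $\mu(H_{i+1})=\mu(H_i)-d+t\leq \mu(H_i)-1$. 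Since $\mu(G)=m-n+1$ and $\mu=0$ exactly for forests, the process stops after at most $m-n+1$ deletions. What your argument buys is that it is fully self-contained and quantitative at every step: the only fact needed about the cycle space is the value of its dimension, the potential-function bookkeeping replaces the paper's correspondence between vertices and basis cycles, and the same argument immediately gives the bound $m-n+c$ for a graph with $c$ components without any modification. The paper's version is shorter and stays closer to the cycle-basis language it has just introduced, but it does not spell out why the selection of ``at most one vertex from each cycle in $B\setminus B_1$'' suffices, which is precisely the point your inequality $t\leq d-1$ makes rigorous.
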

\begin{proof}
If $|S| \leq m-n+1$, then choose $S' = S$. Otherwise, fix a cycle basis of $G$: \[B =\{ C_1, C_2, \ldots, C_{m-n+1} \}. \]
Let $S_1$ be the subset of $S$ consisting of vertices $v$ that is on exactly one cycle in $B$. Note that $S_1$ might be empty.
 Let $B_1$ be the subset of $B$ corresponding to vertices of $S_1$. Note that $|S_1|=|B_1|$ and a cycle basis of $G\setminus S_1$ is $B\setminus B_1$. Let $S_2$ be the subset of $S\setminus S_1$ consisting of at most one vertex from each cycle in $B\setminus B_1$ such that $(G\setminus S_1)\setminus S_2$ does not have any cycle. Choose $S'=S_1\cup S_2$.  Then $G\setminus S'$ does not have any cycles and $|S'|=|S_1|+|S_2|\leq |B_1|+|B\setminus B_1|=|B|=m-n+1$.
\end{proof}

\begin{lemma}\label{anysubsetofSthatleavesnocycleisenough}
Let $G$ be a graph on $n$ vertices with $m$ edges. Let $S$ be an optimal set of vertices for $T^+(G)$ (respectively $T^-(G)$). Then any set $S'\subseteq S$ such that $G\setminus S'$ does not have a cycle is also an optimal set of vertices for $T^+(G)$ (respectively $T^-(G)$).
\end{lemma}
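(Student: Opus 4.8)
The plan is to reduce everything to one elementary fact about the path cover number, valid for \emph{all} graphs: for every graph $H$ and every vertex $v$ of $H$,
\[ P(H)-1 \;\le\; P(H\setminus v) \;\le\; P(H)+1 .\]
I would prove the lower bound by taking a minimum path cover of $H\setminus v$ and adjoining the one-vertex path $\{v\}$; the result is a family of vertex-disjoint induced paths of $H$ covering $V(H)$, so $P(H)\le P(H\setminus v)+1$. I would prove the upper bound by taking a minimum path cover of $H$ and deleting $v$ from it: the unique path through $v$ is split into at most two subpaths (zero, one, or two), each of which is again an induced path of $H\setminus v$, while the other $P(H)-1$ paths are unchanged; this yields a path cover of $H\setminus v$ with at most $P(H)+1$ paths.

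Next I would iterate this fact along a deletion sequence. Put $k=|S\setminus S'|=|S|-|S'|$ (using $S'\subseteq S$), list the vertices of $S\setminus S'$ as $v_1,\dots,v_k$, and set $F_0=G\setminus S'$ and $F_i=F_{i-1}\setminus v_i$, so that $F_k=G\setminus S$. From the lower bound applied at each step, $P(F_{i-1})\le P(F_i)+1$, hence $P(G\setminus S')=P(F_0)\le P(F_k)+k=P(G\setminus S)+k$. From the upper bound applied at each step, $P(F_i)\le P(F_{i-1})+1$, hence $P(G\setminus S)=P(F_k)\le P(F_0)+k=P(G\setminus S')+k$.

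Finally I would combine these chains with the optimality of $S$. Since $G\setminus S'$ has no cycle it is a forest, so $S'$ is an admissible set in the definitions of both $T^+(G)$ and $T^-(G)$. For $T^+$: optimality of $S$ gives $P(G\setminus S)+|S|\le P(G\setminus S')+|S'|$, while the first chain gives $P(G\setminus S')+|S'|\le P(G\setminus S)+k+|S'|=P(G\setminus S)+|S|$; so equality holds throughout and $S'$ is optimal for $T^+(G)$. For $T^-$: optimality of $S$ gives $P(G\setminus S)-|S|\ge P(G\setminus S')-|S'|$, while the second chain gives $P(G\setminus S)-|S|=P(G\setminus S)-k-|S'|\le P(G\setminus S')-|S'|$; so again equality holds and $S'$ is optimal for $T^-(G)$.

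The only place needing a little care is the verification that the pieces obtained when deleting a vertex from an induced path (and the singleton vertex adjoined in the other direction) are again induced subgraphs of the ambient graph; this is routine, since deleting a vertex outside a set $W$ does not change the induced subgraph on $W$, and a subpath of an induced path is induced. I do not expect a genuine obstacle here; note also that $m$ and $n$ play no role in the argument and appear only to match the setting of the preceding lemma.
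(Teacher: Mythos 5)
Your proof is correct, and it is close in spirit to the paper's but organized differently and, in one respect, more complete. The paper works directly with the forests $F=G\setminus S'$ and $F\setminus R=G\setminus S$ (where $R=S\setminus S'$): it takes a minimum path cover of $G\setminus S$, adjoins the vertices of $R$ as singleton paths to get a path cover of $F$, and then argues by contradiction with the optimality of $S$; this singleton-adjunction step is exactly the ``lower bound'' half of your one-vertex inequality $P(H)-1\le P(H\setminus v)\le P(H)+1$, done in one shot rather than iterated. What you add is the other half — deleting a vertex splits at most one path of a minimum cover into at most two induced pieces, so $P(F\setminus R)\le P(F)+|R|$ — and this is precisely the inequality actually needed for the $T^-$ case ($P(G\setminus S)-|S|\le P(G\setminus S')-|S'|$). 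The paper's written justification for that case reuses the singleton-adjunction construction, which only yields $P(F)\le P(F\setminus R)+|R|$, i.e.\ the direction relevant to $T^+$; so your two-sided deletion lemma supplies a step the paper's argument states but does not really derive. Your direct ``two inequalities force equality'' framing also avoids the contradiction argument, and the iteration over $v_1,\dots,v_k$ versus deleting $R$ all at once is only a cosmetic difference. In short: same underlying mechanism (comparing $P(G\setminus S')$ and $P(G\setminus S)$, which differ by at most $|R|$ in either direction, then invoking optimality), but your version makes both directions explicit and is the cleaner of the two.
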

\begin{proof}
First note, by the definitions of $T^+(G)$ (respectively $T^-(G)$), that $G\setminus S$ is a forest  and $T^+(G) = T^+(G\setminus S)+|S|$ (respectively $T^-(G) = T^-(G\setminus S)-|S|$). 

Consider $S' \subseteq S$ such that $G\setminus S'$ is a forest $F$. Let $R = S \setminus S'$. Choose an optimal path cover of  $F\setminus R =G\setminus S$: \[ P = \{P_1, P_2, \ldots, P_k\}.\] Then $P$ together with vertices in $R$ forms a path cover for $F$ which implies
\[ T^+(F\setminus R) + |R| \geq T^+(F). \]
If $T^+(F\setminus R) + |R| > T^+(F)$, then 
\[ T^+(G)=T^+(G\setminus S)+|S| = T^+(F\setminus R) + |R| +|S'| > T^+(F)+|S'|. \] This contradicts the optimality of $S$. Thus, \[ T^+(F\setminus R) + |R| = T^+(F). \] Consequently, \[ T^+(G) = T^+(F\setminus R) + |R| +|S'| = T^+(F)+|S'|. \] That is, $S'$ is also an optimal set of vertices for $T^+(G)$. 

Similarly, for $T^-(G)$ consider $S' \subseteq S$ such that $G\setminus S'$ is a forest $F$. Let $R = S \setminus S'$. Choose an optimal path cover of $F\setminus R=G\setminus S$: \[ P = \{P_1, P_2, \ldots, P_k\}.\] Then $P$ together with vertices in $R$ forms a path cover for $F$ which implies
\[ T^-(F\setminus R) - |R| \leq T^-(F). \]
If $T^-(F\setminus R) - |R| < T^-(F)$, then 
\[ T^-(G)=T^-(G\setminus S)-|S| = T^-(F\setminus R) - |R| -|S'| < T^-(F)-|S'|. \] This contradicts the optimality of $S$. Thus, \[ T^-(F\setminus R) - |R| = T^-(F). \] Consequently, \[ T^-(G) = T^-(F\setminus R) - |R| -|S'| = T^-(F)-|S'|. \] That is, $S'$ is also an optimal set of vertices for $T^-(G)$.
\end{proof}

\begin{prop}
Let $G$ be a connected graph on $n$ vertices with $m$ edges. The optimal sets of vertices for $T^+$ and $T^-$ can be chosen so that each of them has at most $m-n+1$ vertices.
\end{prop}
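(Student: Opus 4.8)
The plan is to combine the two preceding lemmas directly. Fix an optimal set $S$ of vertices for $T^+(G)$; by definition of $T^+$, the graph $G\setminus S$ is a forest and in particular contains no cycles. Since $G$ is connected on $n$ vertices with $m$ edges, its cycle space has dimension exactly $m-n+1$, so Lemma \ref{cyclebasismanyverticesisenough} applies and yields a subset $S'\subseteq S$ with $|S'|\leq m-n+1$ such that $G\setminus S'$ still contains no cycles, i.e. $G\setminus S'$ is a forest. Then Lemma \ref{anysubsetofSthatleavesnocycleisenough} says precisely that any such $S'\subseteq S$ with $G\setminus S'$ a forest is again an optimal set for $T^+(G)$. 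Thus $S'$ is an optimal set for $T^+(G)$ of size at most $m-n+1$.

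The argument for $T^-(G)$ is identical: start from an optimal set $S$ for $T^-(G)$, note $G\setminus S$ is a forest, apply Lemma \ref{cyclebasismanyverticesisenough} to obtain $S'\subseteq S$ with $|S'|\leq m-n+1$ and $G\setminus S'$ a forest, and then invoke the $T^-$ part of Lemma \ref{anysubsetofSthatleavesnocycleisenough} to conclude $S'$ is optimal for $T^-(G)$. So in both cases the optimal set can be chosen with at most $m-n+1$ vertices, as claimed.

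I do not expect a genuine obstacle here, since the substantive work has been pushed into Lemmas \ref{cyclebasismanyverticesisenough} and \ref{anysubsetofSthatleavesnocycleisenough}. The only points that require a moment of care are: (i) checking that the hypotheses of Lemma \ref{cyclebasismanyverticesisenough} are met — it needs $G$ connected, which is exactly our standing assumption, and it needs a set of vertices whose deletion leaves no cycle, which holds because an optimal set for $T^\pm$ by definition leaves a forest; and (ii) making sure the bound $m-n+1$ quoted from the cycle-space dimension is the one actually used, which it is. A one-paragraph proof stitching these together should suffice.
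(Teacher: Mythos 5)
Your argument is correct and is essentially identical to the paper's proof: both take an optimal set $S$, apply Lemma \ref{cyclebasismanyverticesisenough} to extract $S'\subseteq S$ with $|S'|\leq m-n+1$ leaving no cycles, and then use Lemma \ref{anysubsetofSthatleavesnocycleisenough} to conclude $S'$ is still optimal for $T^+$ (respectively $T^-$). No issues to flag.
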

\begin{proof}
Consider an optimal set $S$ of vertices for $T^+(G)$ (respectively $T^-(G)$). Then $G\setminus S$ is a forest. By Lemma \ref{cyclebasismanyverticesisenough}, there is $S' \subseteq S$ such that $|S'| \leq m-n+1$ and $G\setminus S'$ does not have any cycles. Then by Lemma \ref{anysubsetofSthatleavesnocycleisenough}, $S'$ is also an optimal set of vertices for $T^+(G)$ (respectively $T^-(G)$).
\end{proof}

\section{Open Problems}\label{sectionproblems}
Recall from Corollary \ref{allinequalities} and Proposition \ref{prop1}(b) that $T^-(G)$ and $T^+(G)$ are lower and upper bounds for both $M(G)$ and $P(G)$ respectively. Therefore if $T^-(G)=T^+(G)$, then $$T^-(G)=\Delta(G)=P(G)=M(G)=Z(G)=T^+(G).$$
So it is natural to seek characterization of graphs $G$ for which $T^-(G)=T^+(G)$. 

\begin{question}\label{conjecture}
For what graphs $G$, $T^-(G) = T^+(G)$?
\end{question}

Note that $T^-(G)=T^+(G)$ for all forests (see Corollary \ref{allinequalities}) and  all unicyclic graphs in Example \ref{unicyclic}.\\



By Theorem \ref{Mislessthantplus}, $M(G) \leq T^+(G)$ for all graphs $G$ and the equality holds for forests (see Corollary \ref{allinequalities}), cycles (see Examples \ref{cycle}, \ref{tpluscycle}), wheel graphs (see Example \ref{wheel}), and  all unicyclic graphs in Example \ref{unicyclic}. It may be interesting to know what other graphs give the equality.

\begin{question}
Characterize graphs $G$ for which $M(G)=T^+(G)$. 
\end{question}

Similarly by Proposition \ref{prop1}(b), $P(G) \leq T^+(G)$ for all graphs $G$ and the equality holds for forests (see Corollary \ref{allinequalities}) and  all unicyclic graphs in Example \ref{unicyclic}. It may be worth exploring the graphs for which the equality holds.

\begin{question}
Characterize graphs $G$ for which $P(G)=T^+(G)$.
\end{question}

Finally, note that for any graph $G$ we have $Z(G) \leq T^+(G)$ (Observation \ref{zf}), while the equality holds for forests (see Corollary \ref{allinequalities}), cycles (see Examples \ref{cycle}, \ref{tpluscycle}), wheel graphs (see Example \ref{wheel}), and  all unicyclic graphs in Example \ref{unicyclic}. But the equality does not always hold (see Figure \ref{zfgraph}). It might be of interest to classify graphs for which we have the equality.

\begin{question}
	Characterize graphs $G$ for which $Z(G)=T^+(G)$.
\end{question}

\section*{Acknowledgment}
An Acknowledgment will be added later.

\bibliographystyle{plain}
\bibliography{ref}

\end{document}